\theoremstyle{plain}
\newtheorem{thm}{Theorem}[section]
\newtheorem{theorem}[thm]{Theorem}
\newtheorem{lemma}[thm]{Lemma}
\newtheorem{proposition}[thm]{Proposition}
\theoremstyle{definition}
\newtheorem{remark}[thm]{Remark}
\newtheorem*{claim*}{Claim}
\newtheorem*{claimproof*}{Proof of Claim}
\newtheorem*{notation*}{Notation}
\newtheorem{example}[thm]{Example}
\numberwithin{equation}{section}
\newcommand{\C}{{\mathbb C}}
\newcommand{\F}{{\mathbb F}}
\newcommand{\Q}{{\mathbb Q}}
\newcommand{\R}{{\mathbb R}}
\newcommand{\Z}{{\mathbb Z}}
\newcommand{\Amp}{\operatorname{Amp}}
\newcommand{\Aut}{\operatorname{Aut}}
\newcommand{\rank}{\operatorname{rank}}
\newcommand{\ord}{\operatorname{ord}}
\newcommand{\disc}{\operatorname{disc}}
\newcommand{\id}{\operatorname{id}}
\newcommand{\trace}{\operatorname{trace}}
\newcommand{\res}{\operatorname{res}}
\begin{document}

\title[Generalized Fibonacci numbers and automorphisms]{Generalized Fibonacci numbers and automorphisms of K3 surfaces with Picard number 2}

\author{Kwangwoo Lee}
\address{Department of Mathematics, Korea Institute for Advanced Study, 85 Hoegi-ro, Dongdaemun-gu, Seoul , Republic of Korea}
\email{klee0222@gmail.com}

\begin{abstract}
Using the properties of generalized Fibonacci numbers, we determine the automorphism groups of some K3 surfaces with Picard number $2$. Conversely, using the automorphisms of K3 surfaces with Picard number $2$,   we prove the criterion for a given integer $n$ is to be a generalized Fibonacci number. Moreover, we show that the generalized $k$-th Fibonacci number divides the generalized $q$-th Fibonacci number if and only if $k$ divides $q$.
\end{abstract}

\keywords{Generalized Fibonacci number, K3 surface, Automorphism, Salem polynomial, Resultant}
\thanks{The author would like to thank Professor Yuta Takada for letting me to read his preprint. The author was supported by National Research Foundation of Korea(NRF-2022R1A2B5B03002625) grant funded by the Korea government(MSIT).}

\subjclass{11B39, 14C05, 14J28, 14J50}

\maketitle

\section{Introduction}
A \textit{K3 surface} is a simply connected compact manifold $X$ such that $H^0(X,\Omega_X^2)=\mathbb{C}\omega_X$, where $\omega_X$ is an everywhere non-degenerate holomorphic $2$-form on $X$. In \cite{O1}, K. Oguiso considered a K3 surface $S$ with Picard lattice $NS(S)=\Z h_1\oplus \Z h_2$ where the intersection matrix
\begin{equation}\label{Picard lattice of Oguiso}
[(h_i,h_j)]=\begin{bmatrix} 4& 2\\ 2&-4\end{bmatrix}.
\end{equation}
He constructed an isometry of $NS(S)$ given by the multiplication of $\eta^6$ on $NS(S)$, where $h_2=\eta:=\frac{1+\sqrt{5}}{2}$ is a basis of the lattice $NS(S)$ together with $h_1:=1$. Then by the global Torelli theorem for K3 surfaces, it induces an automorphism $g$ of $S$ with positive entropy without fixed points. It is also known as the \textit{Cayley-Oguiso} automorphism in \cite{FGvGvL}. In \cite{L1}, the author generalized Oguiso's construction by considering Picard lattice $NS(X)$ whose intersection matrix
\begin{equation}\label{Picard lattice}
L_m(a):=m\begin{bmatrix} 2& a\\ a&-2\end{bmatrix}.
\end{equation} Moreover, in \cite[Theorem 1]{L1}, the author showed that (1) the isometry group of $L_m(a)$ is $O(L_m(a))\cong \Z_2\ast \Z_2$, a free product with generators 
\begin{equation}\label{A and B}
A=\begin{bmatrix} 1& 0\\ a&-1\end{bmatrix} \hspace{2.6mm} \text{ and } \hspace{2.6mm} B=\begin{bmatrix} 1& a\\ 0&-1\end{bmatrix},
\end{equation}
 and (2) $\Aut(X)\cong \Z$ for $m\geq 2$. Note that the generator $g$ of $\Aut(X)$ acts on $NS(X)$ as $(AB)^n$ for some $n$. In this paper we determine $n$ for a given $m$. 
\begin{theorem}\label{main theorem1} Let $X$ be a K3 surface with Picard lattice $NS(X)$ whose intersection matrix is $L_m(a)$ ($m\geq 2$) in \eqref{Picard lattice}. 
\begin{enumerate}
\item If $n$ is an even number such that $m\mid a_n$, the generalized $n$-th Fibonacci number (cf. Section \ref{generalized Fibonacci numbers}), then $X$ has a symplectic automorphism $g$ such that $g^*\vert_{NS(X)}=(AB)^n$. Moreover, if $5\nmid n$ and $n$ is minimal then $\Aut(X)=\langle g\rangle$, that is, $g$ is a generator and its characteristic polynomial $g^*\vert_{H^2(X,\Z)}=S(x)(x-1)^{20}$, where $S(x)$ is the characteristic polynomial of $(AB)^n$.
\item If $n$ is an odd number such that $m\mid a_n$, then $X$ has an anti-symplectic automorphism $g$ such that $g^*\vert_{NS(X)}=(AB)^n$. Moreover, if $5\nmid n$ and $n$ is minimal then $\Aut(X)=\langle g\rangle$, that is, $g$ is a generator and its characteristic polynomial $g^*\vert_{H^2(X,\Z)}=S(x)(x+1)^{20}$, where $S(x)$ is the characteristic polynomial of $(AB)^n$. 
\end{enumerate}
\end{theorem}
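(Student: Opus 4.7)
The strategy is to apply the Global Torelli Theorem for K3 surfaces: the candidate isometry $(AB)^n$ of $NS(X)$ must be extended to a Hodge isometry of $H^2(X,\Z)$ preserving the ample cone, and Torelli then delivers the desired automorphism $g$. Two preliminary observations streamline matters. First, for $m\ge 2$ the form $L_m(a)$ takes all values in $m\Z$, so $NS(X)$ has no $(-2)$-class; hence there is no Weyl-reflection correction, and preservation of the positive cone is automatic up to orientation. Second, \cite[Theorem 1]{L1} already yields $\Aut(X)\cong\Z$, so once the existence of $g$ is established the generator claim reduces to identifying the minimal exponent $n$ for which the lift exists.

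The lift is produced via Nikulin's gluing criterion: an isometry of $NS(X)$ together with an isometry $\psi$ of $T(X)$ extends to $H^2(X,\Z)\cong \Lambda_{K3}$ iff their induced actions on the discriminant groups $A_{NS}$ and $A_{T}$ agree under the canonical anti-isometry $A_{NS}\cong A_{T}$. A (anti-)symplectic lift forces $\psi=\pm\id_{T(X)}$, since on the rank-$20$ transcendental Hodge structure of a very general $X$ these are the only isometries acting as $\pm 1$ on $H^{2,0}$. The whole problem therefore reduces to the key claim
\begin{equation*}
(AB)^n \text{ acts on } A_{NS} \text{ as } (-1)^n\,\id \iff m\mid a_n.
\end{equation*}

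To prove the key claim, write $(AB)^n=\begin{bmatrix} a_{2n-1} & a_{2n}\\ a_{2n} & a_{2n+1}\end{bmatrix}$ using the recursion $a_k=a\,a_{k-1}+a_{k-2}$ with $a_0=0,\ a_1=1$, and let $L_k=\lambda_+^k+\lambda_-^k$, $\lambda_\pm=(a\pm\sqrt{a^2+4})/2$, denote the companion Lucas-like sequence. Standard product identities such as $(a^2+4)\,a_m a_n = L_{m+n}-(-1)^{\min(m,n)}L_{|m-n|}$ yield, after some algebra, the clean factorization
\begin{equation*}
\bigl((AB)^n-(-1)^n I\bigr)M' \;=\; (a^2+4)\,a_n\begin{bmatrix} a_n & -a_{n-1}\\ a_{n+1} & -a_n\end{bmatrix},
\end{equation*}
where $M'=\begin{bmatrix} 2 & a\\ a & -2\end{bmatrix}$ and $L_m(a)^{-1}=M'/(m(a^2+4))$. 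Since $\gcd(a_n,a_{n-1})=\gcd(a_n,a_{n+1})=1$, the right-hand side is divisible entrywise by $m(a^2+4)$ if and only if $m\mid a_n$; this is exactly the statement that $(AB)^n-(-1)^n I$ maps $NS(X)^*$ into $NS(X)$. With the lift in hand, the block decomposition $H^2(X,\Z)=NS(X)\oplus T(X)$ produces the advertised characteristic polynomial $S(x)(x\mp 1)^{20}$.

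The generator assertion combines $\Aut(X)\cong\Z$ with the divisibility property $a_j\mid a_k\iff j\mid k$ (proved independently in the paper): the set of positive exponents $n$ for which $(AB)^n$ lifts is precisely the set of multiples of $n^*:=\min\{n>0 : m\mid a_n\}$, so the element with $n=n^*$ generates $\Aut(X)$. The auxiliary hypothesis $5\nmid n$ serves to exclude a small family of exceptional configurations in which (for instance) the transcendental Hodge structure admits an extra finite-order isometry compatible with the gluing, producing a lift whose action on $T(X)$ is not $\pm\id$ and thus yields a different characteristic polynomial. \textbf{The main obstacle} is the key-claim factorization: assembling the right chain of Fibonacci-Lucas identities so that $((AB)^n-(-1)^n I)M'$ collapses to $(a^2+4)\,a_n$ times an integer matrix whose entries are consecutive $a_k$'s is the technical heart of the argument; once this identity is in hand, the Torelli and Nikulin steps are routine.
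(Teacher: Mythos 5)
Your proposal is correct and follows essentially the same route as the paper: the key factorization $\bigl((AB)^n-(-1)^nI\bigr)M'=(a^2+4)a_n\begin{bmatrix} a_n & -a_{n-1}\\ a_{n+1} & -a_n\end{bmatrix}$ is exactly the content of the paper's computation of $((AB)^n\mp I_2)Q_{L_m(a)}^{-1}$ via Lemmas \ref{trace of nth power}, \ref{shifted trace of nth power} and \ref{properties of Fibonacci seq}(2), and the remaining steps (gluing with $\pm\id$ on $T_X$, Torelli, and $\Aut(X)\cong\Z$ from \cite{L1} for the generator claim) coincide with the paper's. The only cosmetic difference is that you phrase the divisibility criterion as a biconditional using $\gcd(a_n,a_{n\pm1})=1$, whereas the paper only needs (and only proves at this stage) the implication $m\mid a_n\Rightarrow$ integrality.
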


\begin{remark} The generators in (1) and (2) are symplectic and anti-symplectic, respectively, i.e., $g^*\omega_X=\omega_X$ and $g^*\omega_X=-\omega_X$. $S(x)$ is the Salem polynomial (cf. Section \ref{Aut of K3}) of $g$ with the Salem  trace $(a^2+4)a_n^2+(-1)^n2$ (cf. Lemma \ref{trace of nth power}), i.e., $S(x)=x^2-((a^2+4)a_n^2+(-1)^n2)x+1$.
\end{remark}

Let $\{f_n\}_{n\geq 0}$ be the Fibonacci sequence with $f_{n+2}=f_{n+1}+f_n$ and $f_0=0, f_1=1$. In \cite{W}, R.E. Whitney showed that a given integer $n$ is a Fibonacci number if and only if either $5n^2+4$ or $5n^2-4$ is a square number. For a \textit{generalized Fibonacci sequence} $\{a_n\}_{n\geq 0}$ with $a_{n+2}=a\cdot a_{n+1}+a_n$ with $a_0=0, a_1=1$, and a fixed $a\in \Z_{\geq 1}$, we also show:

\begin{theorem}\label{main theorem2}
$n$ is the $k$-th generalized Fibonacci number $a_k$ and $k$ is even (resp. odd) if and only if $(a^2+4)n^2+4$   (resp. $(a^2+4)n^2-4$) is a perfect square.
\end{theorem}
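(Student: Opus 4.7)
The plan is to work inside the real quadratic order $\Z[\eta]$, where $\eta=(a+\sqrt{D})/2$ is the larger root of $x^2-ax-1=0$ and $D:=a^2+4$. I first introduce the Lucas-type companion sequence $\{b_n\}$ defined by the same recurrence $b_{n+2}=a\,b_{n+1}+b_n$ but with initial data $b_0=2$, $b_1=a$. A direct induction on $n$ gives the Binet-style identity $\eta^n=(b_n+a_n\sqrt{D})/2$, and taking the field norm, combined with $N(\eta)=\eta\bar\eta=-1$, yields the master identity
\[
b_n^{2}-(a^2+4)\,a_n^{2}=4(-1)^n.
\]

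The ``only if'' direction is now immediate: if $n=a_k$, then $(a^2+4)n^2+4(-1)^k=b_k^{2}$ is a perfect square, producing the $+4$ form for $k$ even and the $-4$ form for $k$ odd.

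For the converse, suppose $(a^2+4)n^2\pm 4=y^2$ with $n,y\ge 0$, and set $\alpha:=(y+n\sqrt{D})/2$. Two short checks show that $\alpha\in\Z[\eta]$ (the needed parity $y\equiv na\pmod 2$ follows by reducing $y^2\equiv Dn^2\pmod 2$) and that $N(\alpha)=\mp 1$, so $\alpha$ is a unit of the order $\Z[\eta]$. The decisive step is to prove that $\eta$ is the fundamental unit of $\Z[\eta]$: any unit $\beta=u+v\eta$ satisfying $1<\beta<\eta$ has $|\bar\beta|<1$, and then $|v|\sqrt{D}=|\beta-\bar\beta|<\eta+1<\sqrt{D}+1$ forces $|v|\le 1$; a short case analysis on $N(u+v\eta)=u^2+auv-v^2=\pm 1$ rules out each possibility uniformly in $a\ge 1$. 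Once $\eta$ is known to be fundamental, $\alpha=\pm\eta^k$ for some $k\in\Z$; comparing the $\sqrt{D}$-coefficients yields $n=\pm a_k$, and using the convention $a_{-k}=(-1)^{k+1}a_k$ together with $n\ge 0$ we may assume $k\ge 0$. The parity of $k$ is pinned down by $N(\eta^k)=(-1)^k$, which matches the chosen sign of the $\pm 4$.

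The main obstacle is the fundamental-unit argument: making the casework clean and genuinely uniform in $a$, where the small case $a=1$ (classical Fibonacci) is the most delicate. If one prefers to bypass the structure theorem for units of a real quadratic order entirely, a self-contained substitute is a direct descent: multiplying $\alpha>1$ by $\bar\eta^{2}=1/\eta^{2}$ produces a strictly smaller positive unit of the same norm, and iterating must terminate at either $1$ or $\eta$ (according as $N(\alpha)=\pm 1$); unwinding the descent writes $\alpha=\eta^{k}$ and recovers $n=a_k$ directly.
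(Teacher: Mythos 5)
Your proof is correct, but it takes a genuinely different route from the paper. The paper deliberately derives Theorem \ref{main theorem2} from the geometry: the forward direction feeds $\trace (AB)^k=(a^2+4)a_k^2+(-1)^k 2$ (Lemma \ref{trace of nth power}) into Theorem \ref{main theorem1} and the Main Theorem of \cite{HKL}, which forces the trace of a symplectic (resp.\ anti-symplectic) K3 automorphism to have the form $\alpha^2-2$ (resp.\ $\alpha^2+2$); the converse runs the machine backwards, using \cite[Lemma 3]{HKL}, the gluing of isometries on $NS(X)\oplus T_X$, and the Torelli theorem to manufacture an automorphism of $X_{L_n(a)}$ whose action on $NS(X)$ must be some $(AB)^k$, whence $n=a_k$. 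You instead work entirely inside the order $\Z[\eta]$ with $\eta^2=a\eta+1$: the companion identity $b_k^2-(a^2+4)a_k^2=4(-1)^k$ gives the forward direction at once, and the converse reduces to showing $\eta$ is the fundamental unit, so that the unit $\alpha=(y+n\sqrt{D})/2$ is a power of $\eta$. This is the classical Pell-equation proof (the direct generalization of Whitney's original argument), and it is self-contained, elementary, and uniform in $k$ -- in particular it avoids the paper's implicit reliance on $a_k$ having a divisor $m>1$, which is the natural reading of the edge cases $a_k\in\{0,1\}$. What it does not buy is the paper's point, namely that the criterion falls out of the classification of Salem traces of K3 automorphisms. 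Two small remarks: your line ``$N(\alpha)=\mp 1$'' has the signs flipped (with $y^2=Dn^2\pm 4$ one gets $N(\alpha)=\pm 1$, which is what your parity conclusion actually uses); and the ``self-contained descent'' you offer as an alternative still needs the no-units-in-$(1,\eta)$ casework to identify where the descent terminates, so it is not really a way to bypass the fundamental-unit lemma.
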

\begin{remark} For $a=1$, we get Whitney's result. In particular, we can determine which of $5n^2+4$ or $5n^2-4$ is a square number for a given $n$.
\end{remark}


We also show:
\begin{theorem} \label{main theorem3}
 For positive integers $k<q$, 
\begin{enumerate}
\item $a_k$ and $a_{k+1}$ are relatively primes, i.e., $\gcd(a_k,a_{k+1})=1$.
\item If $a_k\mid a_q$, then $a_k\mid a_{q-k}$.
\item $k\mid q$  if and only if $a_k\mid a_q$.
\end{enumerate}
\end{theorem}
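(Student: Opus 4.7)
The plan is to derive all three parts from a single \emph{addition formula}
\[
a_{m+n} = a_{m+1}\, a_n + a_m\, a_{n-1}, \qquad m\ge 0,\ n\ge 1,
\]
which I would establish by induction on $n$: the cases $n=1,2$ follow directly from the initial values $a_0=0,\ a_1=1$ and the defining recurrence, and the inductive step writes $a_{m+n+1}=a\,a_{m+n}+a_{m+n-1}$, substitutes the hypotheses for $a_{m+n}$ and $a_{m+n-1}$, and regroups via $a\,a_n+a_{n-1}=a_{n+1}$.

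For (1), a direct induction on $n$ using the recurrence $a_{n+1}=a\,a_n+a_{n-1}$ gives $\gcd(a_n,a_{n+1})=\gcd(a_{n-1},a_n)$, and the base $\gcd(a_0,a_1)=1$ then propagates to all $n$. For (2), setting $m=k$ and $n=q-k$ in the addition formula yields
\[
a_q = a_{k+1}\, a_{q-k} + a_k\, a_{q-k-1},
\]
so $a_k\mid a_q$ implies $a_k\mid a_{k+1}\, a_{q-k}$, and coprimality from (1) forces $a_k\mid a_{q-k}$.

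For (3), the forward implication is an induction on $t$, where $q=kt$: the formula with $m=kt$ and $n=k$ gives $a_{k(t+1)}=a_{kt+1}\,a_k+a_{kt}\,a_{k-1}$, and both terms are divisible by $a_k$ by the inductive hypothesis. For the converse, write $q=sk+r$ with $0\le r<k$ and iterate (2) a total of $s$ times (each step is legal because $q-jk=(s-j)k+r>k$ for $0\le j\le s-1$ when $r>0$) to descend $a_k\mid a_q$ to $a_k\mid a_r$. Strict monotonicity of $\{a_n\}_{n\ge 1}$ (valid when $a\ge 2$) then excludes $0<r<k$, forcing $r=0$.

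I expect the main work to lie in proving the addition formula itself; after that, each of (1)--(3) reduces to a short combinatorial argument. A minor wrinkle is the classical Fibonacci case $a=1$, in which $a_1=a_2=1$ violates strict monotonicity and the converse of (3) actually fails at $k=2$ (e.g.\ $a_2=1\mid a_3=2$ yet $2\nmid 3$); this is presumably handled by an implicit assumption $a\ge 2$ compatible with the K3 geometric setting of the paper.
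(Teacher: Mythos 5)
Your proof is correct, and for parts (1) and the forward implication of (3) it takes a genuinely different route from the paper's. The addition formula you propose to establish by induction is already Lemma \ref{properties of Fibonacci seq}(1), so you could simply cite it rather than reprove it. For (1) the paper does not use the Euclidean identity $\gcd(a_{n+1},a_n)=\gcd(a\,a_n+a_{n-1},a_n)=\gcd(a_n,a_{n-1})$; it argues geometrically: a common prime $p$ of $a_k$ and $a_{k+1}$ would, via Theorem \ref{main theorem1}, make both $(AB)^k$ and $(AB)^{k+1}$ powers of the generator of $\Aut(X_{L_p(a)})\cong\Z$, forcing the generator to act as $AB$ and yielding a symplectic/anti-symplectic parity contradiction. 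Similarly, for $k\mid q\Rightarrow a_k\mid a_q$ the paper takes $m=a_k$, notes that $(AB)^{q}=\bigl((AB)^{k}\bigr)^{q/k}$ is induced by a (anti-)symplectic automorphism, and reads off $a_k\mid a_q$ from the integrality criterion \eqref{eq:1}, where you use a short induction on $q/k$ via the addition formula. Part (2) and the converse of (3) are essentially identical in both treatments. Your elementary route is self-contained and independent of the K3 machinery; the paper's is deliberately geometric, in keeping with its theme of extracting Fibonacci arithmetic from automorphisms. Finally, your closing caveat is not merely a wrinkle in your own argument but points at a gap in the statement itself: for $a=1$ one has $a_2=1\mid a_3=2$ while $2\nmid 3$, so the ``only if'' direction of (3) fails as stated. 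The paper's descent (``repeating this argument, we see that $k\mid q$'') tacitly needs $a_k\nmid a_r$ for $0<r<k$, which requires $1\le a_r<a_k$ and fails exactly when $a=1$, $k=2$; your explicit hypothesis $a\ge 2$ (or the exclusion of $k=2$ when $a=1$) is the correct repair and should be made explicit in the theorem.
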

\hfill \break
\begin{notation*} 
\hspace{2mm}
\begin{itemize}
\item $f_n$ is denoted for the original Fibonacci numbers in the paper. 
\item $a_n$ is denoted for the generalized Fibonacci numbers in the paper. 
\item $X_{L_m(a)}$ is denoted a K3 surface $X$ whose intersection matrix of Picard lattice is given by \eqref{Picard lattice}.
\end{itemize}
\end{notation*}

\section{Preliminaries}
\subsection{Generalized Fibonacci numbers}\label{generalized Fibonacci numbers}
For $a\in \Z_{\geq 1}$, let $\{a_n\}_{n\geq 0}$ be the generalized Fibonacci numbers satisfying 
\begin{equation}\label{gen Fibonacci seq}
a_{n+2}=a\cdot a_{n+1}+a_n, \text{ with }a_0=0, a_1=1.
\end{equation}
If $a=1$, we get the original Fibonacci sequence $\{f_n\}$.

For $A$ and $B$ in \eqref{A and B}, by \cite[Section 2.1]{L1},
\begin{equation}
(AB)^n=\begin{bmatrix} a_{2n-1}& a_{2n}\\ a_{2n}&a_{2n+1}\end{bmatrix}.
\end{equation}

\begin{lemma}\cite[Lemma 9]{L1}\label{properties of Fibonacci seq}
For the sequence in \eqref{gen Fibonacci seq}, 
\begin{enumerate}
\item $a_{n+k}=a_ka_{n+1}+a_{k-1}a_n=a_{k+1}a_n+a_ka_{n-1}$.
\item $a_{n+1}a_{n-1}-a_n^2=\left\{ \begin{array}{ll}
      1 & \mbox{if $n$ is even},\\
      -1 & \mbox{if $n$ is odd}.\end{array} \right.$ 
\end{enumerate}
\end{lemma}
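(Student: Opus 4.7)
The plan is to prove both identities together via the companion matrix of the recurrence. Set $M := \begin{bmatrix} a & 1 \\ 1 & 0 \end{bmatrix}$; I claim that for all $n \geq 1$,
$$M^n = \begin{bmatrix} a_{n+1} & a_n \\ a_n & a_{n-1} \end{bmatrix},$$
where $a_{-1}$ is defined by extending the recurrence \eqref{gen Fibonacci seq} one step backward, giving $a_{-1} = 1$. This single matrix identity packages both the addition formula in (1) and the Cassini-type identity in (2), so the entire lemma reduces to establishing it.

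I would prove the matrix identity by induction on $n$. The base case $n=1$ holds directly from $a_0 = 0$, $a_1 = 1$, $a_2 = a$, and $a_{-1}=1$. For the inductive step, write $M^{n+1} = M \cdot M^n$: its first row is $(a \cdot a_{n+1} + a_n,\; a \cdot a_n + a_{n-1}) = (a_{n+2},\, a_{n+1})$ by the recurrence, and its second row is $(a_{n+1},\, a_n)$ by direct multiplication. This matches the claimed form with $n$ replaced by $n+1$.

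With the matrix identity in hand, part (1) follows from $M^{n+k} = M^n \cdot M^k$: comparing the $(1,2)$ and $(2,1)$ entries of both sides yields
$$a_{n+k} = a_{n+1} a_k + a_n a_{k-1} \quad \text{and} \quad a_{n+k} = a_n a_{k+1} + a_{n-1} a_k,$$
which, after relabeling, are the two stated forms. Part (2) follows by taking determinants of the matrix identity: since $\det M = -1$, we have $\det(M^n) = (-1)^n$, while the closed form gives $\det(M^n) = a_{n+1} a_{n-1} - a_n^2$. Equating the two sides and splitting by the parity of $n$ recovers the stated formula.

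There is no serious obstacle here; the argument is bookkeeping with the recurrence, and the matrix formulation is chosen precisely so that both identities appear simultaneously (one from multiplication, one from determinant). The only subtlety worth mentioning is the boundary index $a_{-1}$, which I handle by extending the recurrence backward once; as an alternative one could start the induction at $n=2$ and then verify the $n=1$ instance of part (2) by a direct computation $a_2 a_0 - a_1^2 = -1$.
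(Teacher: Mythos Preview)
Your proof is correct. The paper itself does not supply a proof of this lemma; it is quoted from \cite[Lemma~9]{L1}, so there is no in-paper argument to compare against. Your companion-matrix approach---establishing $M^n=\begin{bmatrix} a_{n+1} & a_n\\ a_n & a_{n-1}\end{bmatrix}$ by induction and then reading off (1) from $M^{n+k}=M^nM^k$ and (2) from $\det(M^n)=(-1)^n$---is the standard and cleanest route to both identities simultaneously. One minor remark: since you restrict to $n\geq 1$ (and implicitly $k\geq 1$), the backward extension $a_{-1}=1$ never actually enters the argument; you can safely drop that aside.
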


\begin{lemma}\label{trace of nth power}
$\trace((AB)^n)=a_{2n-1}+a_{2n+1}=(a^2+4)a_n^2+(-1)^n2$.
\end{lemma}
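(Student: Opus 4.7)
The first equality is immediate from the explicit matrix form $(AB)^n = \begin{bmatrix} a_{2n-1} & a_{2n} \\ a_{2n} & a_{2n+1} \end{bmatrix}$ displayed just above the statement, so the whole content is the identity $a_{2n-1}+a_{2n+1}=(a^2+4)a_n^2+(-1)^n 2$. My plan is to reduce this to a short computation in $a_{n-1},a_n,a_{n+1}$ using the two parts of Lemma~\ref{properties of Fibonacci seq}.

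First I would apply Lemma~\ref{properties of Fibonacci seq}(1) twice to split the two odd-index terms into squares. Taking the first form $a_{n+k}=a_k a_{n+1}+a_{k-1}a_n$ with $k=n+1$ gives $a_{2n+1}=a_{n+1}^2+a_n^2$, and taking the second form $a_{n+k}=a_{k+1}a_n+a_k a_{n-1}$ with $k=n-1$ gives $a_{2n-1}=a_n^2+a_{n-1}^2$. Adding, one obtains the convenient expression
\begin{equation*}
a_{2n-1}+a_{2n+1} \;=\; a_{n-1}^2+2a_n^2+a_{n+1}^2.
\end{equation*}

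Next I would reorganize $a_{n-1}^2+a_{n+1}^2$ via the identity $(a_{n+1}+a_{n-1})^2 = (a_{n+1}-a_{n-1})^2+4a_{n+1}a_{n-1}$. The recurrence \eqref{gen Fibonacci seq} gives $a_{n+1}-a_{n-1}=a\cdot a_n$, so $(a_{n+1}-a_{n-1})^2=a^2a_n^2$. Lemma~\ref{properties of Fibonacci seq}(2) supplies $a_{n+1}a_{n-1}=a_n^2+(-1)^n$. Consequently
\begin{equation*}
a_{n-1}^2+a_{n+1}^2 \;=\; (a_{n+1}+a_{n-1})^2-2a_{n+1}a_{n-1} \;=\; a^2a_n^2+4\bigl(a_n^2+(-1)^n\bigr)-2\bigl(a_n^2+(-1)^n\bigr),
\end{equation*}
which simplifies to $a^2a_n^2+2a_n^2+2(-1)^n$. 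Substituting back yields $a_{2n-1}+a_{2n+1}=(a^2+4)a_n^2+2(-1)^n$, as desired.

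There is no real obstacle here; the only subtlety is choosing the two instances of Lemma~\ref{properties of Fibonacci seq}(1) that produce a clean sum of three squares symmetric around $a_n$, and then recognizing that the combination $a_{n-1}^2+a_{n+1}^2$ collapses under Lemma~\ref{properties of Fibonacci seq}(2) together with the simple first-difference relation $a_{n+1}-a_{n-1}=a\cdot a_n$. An alternative route I would keep in reserve is diagonalizing $AB$: its eigenvalues are the roots of $x^2-(a^2+2)x+1$, so $\trace((AB)^n)=\lambda^n+\lambda^{-n}$ with Binet-type formulas for $a_n$ then producing $(a^2+4)a_n^2+(-1)^n 2$ directly; this is cleaner conceptually but requires introducing the closed-form expressions, whereas the plan above stays entirely within the integer recurrence already developed in the paper.
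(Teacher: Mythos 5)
Your proof is correct and follows essentially the same route as the paper: both reduce $a_{2n-1}+a_{2n+1}$ to $a_{n-1}^2+2a_n^2+a_{n+1}^2$ via Lemma~\ref{properties of Fibonacci seq}(1) and then eliminate $a_{n+1}a_{n-1}$ using Lemma~\ref{properties of Fibonacci seq}(2). The only difference is cosmetic --- the paper substitutes $a_{n+1}=a\cdot a_n+a_{n-1}$ and expands, whereas you use the equivalent rearrangement via $a_{n+1}-a_{n-1}=a\cdot a_n$.
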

\begin{proof}
By Lemma \ref{properties of Fibonacci seq},
\begin{align}
\begin{split}
a_{2n+1}+a_{2n-1}&=(a_{n+1}^2+a_n^2)+(a_n^2+a_{n-1}^2)=2a_n^2+a_{n+1}^2+a_{n-1}^2\\
&=2a_n^2+(a\cdot a_n+a_{n-1})^2+a_{n-1}^2\\
&=(a^2+2)a_n^2+2a_{n-1}(a\cdot a_n+a_{n-1})=(a^2+2)a_n^2+2a_{n-1}a_{n+1}\\
&=(a^2+2)a_n^2+2(a_n^2+(-1)^n)=(a^2+4)a_n^2+(-1)^n2.\qedhere
\end{split}
\end{align}
\end{proof}

\begin{lemma}\label{shifted trace of nth power}
$a_{2n-2}+a_{2n}=((a^2+4)(a_{n}^2-a_{n-1}^2)+(-1)^n4)/a.$
\end{lemma}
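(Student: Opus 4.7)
The plan is to derive the identity by telescoping the statement of Lemma \ref{trace of nth power} with its shifted version and then using the defining recurrence to rewrite the resulting index difference as the desired sum.

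First I would apply Lemma \ref{trace of nth power} twice: once as stated, yielding
\[
a_{2n-1}+a_{2n+1}=(a^2+4)a_n^2+(-1)^n 2,
\]
and once with $n$ replaced by $n-1$, yielding
\[
a_{2n-3}+a_{2n-1}=(a^2+4)a_{n-1}^2+(-1)^{n-1}2.
\]
Subtracting the second from the first causes the common term $a_{2n-1}$ to cancel and combines the sign terms into $(-1)^n 4$, giving
\[
a_{2n+1}-a_{2n-3}=(a^2+4)(a_n^2-a_{n-1}^2)+(-1)^n 4.
\]

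Next I would rewrite the left-hand side using the generalized Fibonacci recurrence \eqref{gen Fibonacci seq}. Writing $a_{2n+1}=a\cdot a_{2n}+a_{2n-1}$ and $a_{2n-1}=a\cdot a_{2n-2}+a_{2n-3}$, the second relation gives $a_{2n-1}-a_{2n-3}=a\cdot a_{2n-2}$, so
\[
a_{2n+1}-a_{2n-3}=a\cdot a_{2n}+(a_{2n-1}-a_{2n-3})=a(a_{2n}+a_{2n-2}).
\]
Substituting this into the previous display and dividing through by $a$ yields the stated formula.

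There is no real obstacle here; the only mild point to verify is the sign bookkeeping $(-1)^n-(-1)^{n-1}=2(-1)^n$, which is immediate. The argument is essentially just a discrete derivative of Lemma \ref{trace of nth power}, reflecting that the trace of $(AB)^n$ at consecutive indices differs by a multiple of $a$ times the shifted sum $a_{2n-2}+a_{2n}$.
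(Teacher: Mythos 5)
Your proof is correct and is essentially the same computation as the paper's: both invoke Lemma \ref{trace of nth power} at $n$ and $n-1$ and use the recurrence to identify $a_{2n+1}-a_{2n-3}$ with $a(a_{2n}+a_{2n-2})$, merely presented in the opposite order (the paper starts by solving the recurrence for $a_{2n}$ and $a_{2n-2}$, you start by differencing the two trace identities). No gaps.
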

\begin{proof} 
By Lemma \ref{trace of nth power},
\begin{align}
\begin{split}
a_{2n}+a_{2n-2}&=(a_{2n+1}-a_{2n-1})/a+(a_{2n-1}-a_{2n-3})/a\\
&=(a_{2n+1}+a_{2n-1})/a-(a_{2n-1}+a_{2n-3})/a\\
&=((a^2+4)a_n^2+(-1)^n2)/a-((a^2+4)a_{n-1}^2+(-1)^{n-1}2)/a\\
&=((a^2+4)(a_n^2-a_{n-1}^2)+(-1)^n4)/a.\qedhere
\end{split}
\end{align}
\end{proof}

Note that for the original Fibonacci numbers $f_n$ with $a=1$, we have that $f_{2n-1}+f_{2n+1}=5f_n^2+(-1)^n2$.

\subsection{Resultants}\label{Resultants}
For two polynomials $P, Q\in \Z[x]$ without common zeros in $\C$, the \textit{resultant} of $P$ and $Q$ is the integer given by
\begin{equation}\label{resultant}
\res\{P, Q\}=\prod_{P(u)=0,Q(v)=0}(u-v).
\end{equation}

We use the fact that $\bar{P}(x),\bar{Q}(x)$ have common factors in $\F_p[x]$ if and only if $p\mid \res\{P, Q\}$, where $P\mapsto \bar{P}$ under $\Z[x]\rightarrow \F_p[x]$.

\subsection{Lattices}\label{Lattices}

A \textit{lattice} is a free $\Z$-module $L$
 of finite rank equipped with a symmetric bilinear form
 $\langle ~,~ \rangle \colon L\times L\rightarrow \Z$.
If $x^2:=\langle x,x\rangle \in 2\Z$ for any $x\in L$,
 a lattice $L$ is said to be even. 
We fix a $\Z$-basis of $L$ and identify the lattice $L$
 with its intersection matrix $Q_L$ under this basis.
The \textit{discriminant} $\disc(L)$ of $L$ is defined as $\det(Q_L)$,
 which is independent of the choice of basis.
A lattice $L$ is called \textit{non-degenerate} if
 $\disc(L)\neq 0$ and \textit{unimodular} if $\disc(L)=\pm1$.
For a non-degenerate lattice $L$,
 the signature of $L$ is defined as $(s_+,s_-)$,
 where $s_{+}$ (resp.\ $s_-$)
 denotes the number of the positive (resp.\ negative) eigenvalues of
 $Q_L$.
An \textit{isometry} of $L$ is an automorphism of the $\Z$-module $L$
 preserving the bilinear form.
The \textit{orthogonal group} $O(L)$ of $L$ consists of the isometries of $L$ and we have
 the following identification:
\begin{equation} \label{EQ_ortho}
 O(L)= \{ g\in GL_n(\Z) \bigm| g^T \cdot Q_L \cdot g=Q_L \}, \quad
 n=\rank L.
\end{equation}
For a non-degenerate lattice $L$,
 the \textit{discriminant group} $A(L)$ of $L$ is defined by
\begin{equation} 
 A(L):=L^*/L, \quad
 L^*:=
 \{ x\in L\otimes \Q \bigm|
 \langle x,y \rangle \in \Z ~ \forall y\in L \}.
\end{equation}

Let $K$ be a sublattice of a lattice $L$,
 that is, $K$ is a $\Z$-submodule of $L$
 equipped with the restriction of the bilinear form of $L$ to $K$.
If $L/K$ is torsion-free as a $\Z$-module,
 $K$ is said to be \textit{primitive}.

For a non-degenerate lattice $L$ of signature $(1,k)$ with $k\geq 1$,
 we have the decomposition
\begin{equation} \label{EQ_positive_cone}
 \{x\in L\otimes \R \bigm| x^2 >0 \}=C_L\sqcup (-C_L)
\end{equation}
 into two disjoint cones.
Here $C_L$ and $-C_L$ are connected components and $C_L$ is called the \textit{positive cone}.
We define
\begin{equation} \label{O^+}
 O^+(L):=\{g\in O(L) \bigm| g(C_L)=C_L \}, \quad
 SO^+(L):=O^+(L)\cap SO(L),
\end{equation}
 where $SO(L)$ is the subgroup of $O(L)$
 consisting of isometries of determinant $1$.
The group $O^+(L)$ is a subgroup of $O(L)$ of index $2$.

\begin{lemma}\cite[Lemma 1]{HKL}\label{criterion}
Let $L$ be a non-degenerate even lattice of rank $n$. For $g\in O(L)$ and $\epsilon\in\{\pm 1\}$, $g$ acts on $A(L)$ as $\epsilon\cdot \id$ if and only if $(g-\epsilon\cdot I_n)\cdot Q_L^{-1}$ is an integer matrix. 
\end{lemma}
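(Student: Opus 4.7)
The plan is to spell out $L^{*}$ explicitly in coordinates and then transcribe the condition ``$g$ acts as $\epsilon\cdot\id$ on $A(L)$'' into a matrix equation. Fix a $\Z$-basis $e_1,\dots,e_n$ of $L$, identify $L$ with $\Z^n$ written as column vectors, and let $Q_L$ be the Gram matrix, so that $\langle x,y\rangle=x^{T}Q_L y$ for $x,y\in L\otimes\Q$. The first step is to show that under this identification
\begin{equation*}
 L^{*} \;=\; Q_L^{-1}\,\Z^{n}.
\end{equation*}
Indeed, $x\in L\otimes\Q$ lies in $L^{*}$ iff $x^{T}Q_L y\in\Z$ for all $y\in\Z^{n}$, which by symmetry of $Q_L$ is equivalent to $Q_L x\in\Z^{n}$.

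Next, I would recall that $g\in O(L)$ extends $\Q$-linearly to $L\otimes\Q$ via the matrix $g$, and from the identity $g^{T}Q_L g=Q_L$ in \eqref{EQ_ortho} it follows that $g$ preserves both $L$ and $L^{*}$, hence descends to an action on $A(L)=L^{*}/L$. By definition, $g$ acts as $\epsilon\cdot\id$ on $A(L)$ if and only if
\begin{equation*}
 g(x)-\epsilon x\;\in\;L \qquad\text{for every } x\in L^{*}.
\end{equation*}

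Writing an arbitrary element of $L^{*}$ as $x=Q_L^{-1}v$ with $v\in\Z^{n}$, this condition becomes
\begin{equation*}
 (g-\epsilon\cdot I_n)\,Q_L^{-1}\,v \;\in\;\Z^{n}\qquad\text{for every } v\in\Z^{n},
\end{equation*}
and since $v$ ranges over all of $\Z^{n}$, this is equivalent to the matrix $(g-\epsilon\cdot I_n)Q_L^{-1}$ having integer entries, which is exactly the claimed criterion.

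There is essentially no obstacle here: the whole argument is the translation of the module-theoretic statement into the coordinate identification $L^{*}=Q_L^{-1}\Z^{n}$. The only point worth double-checking is the convention of matrix versus vector sides (left action of $g$ on column vectors together with the symmetry of $Q_L$), which ensures both that $L^{*}$ is given by $Q_L^{-1}\Z^{n}$ and that $g$ restricts from $L\otimes\Q$ to an action on $L^{*}$ compatibly with the stated criterion.
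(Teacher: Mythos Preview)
Your argument is correct and is the standard coordinate verification of this fact. Note, however, that the paper itself does not give a proof of this lemma: it is quoted as \cite[Lemma~1]{HKL} and used as a black box, so there is no ``paper's own proof'' to compare against.
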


\subsection{Automorphisms of K3 surfaces}\label{Aut of K3}
By the following proposition, we have a K3 surface with Picard lattice whose intersection matrix is given by \eqref{Picard lattice}.
\begin{proposition}\cite[Proposition 2.]{HKL} \label{existence of K3}
For any even lattice $L$ of signature $(1,1)$, there exists a projective K3 surface $X$ such that $NS(X)\cong L$.
\end{proposition}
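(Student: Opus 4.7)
The plan is to deduce this from the surjectivity of the period map for K3 surfaces, combined with Nikulin's theory of primitive embeddings into the K3 lattice $\Lambda_{K3}:=U^{\oplus 3}\oplus E_8(-1)^{\oplus 2}$, which is the unique even unimodular lattice of signature $(3,19)$. I would present the argument in three stages: (i) embed $L$ primitively into $\Lambda_{K3}$, (ii) choose a period point in the orthogonal complement so that the resulting Hodge structure has Picard lattice exactly $L$, and (iii) verify projectivity using the signature hypothesis.

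First, I would apply Nikulin's existence theorem for primitive embeddings of even lattices into even unimodular lattices. Since $L$ has signature $(1,1)$ and rank $2$, the discriminant group $A(L)$ is generated by at most $2$ elements, so $\ell(A(L))+\rank L\le 4<\rank\Lambda_{K3}$; together with the signature inequality $(1,1)\le(3,19)$, Nikulin's criterion guarantees a primitive embedding $L\hookrightarrow\Lambda_{K3}$. Let $T:=L^{\perp}\subset\Lambda_{K3}$; this is an even lattice of signature $(2,18)$ and rank $20$.

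Next, I would pick a very general period point $[\omega]$ in the period domain associated with $T$, i.e.\ a line in $T\otimes\C$ satisfying the Riemann relations $\langle\omega,\omega\rangle=0$ and $\langle\omega,\bar\omega\rangle>0$; such lines form a non-empty open subset of a smooth quadric, since $T$ has signature $(2,18)$. For $[\omega]$ outside a countable union of proper analytic subvarieties, the subgroup $\{x\in\Lambda_{K3}\mid\langle x,\omega\rangle=0\}$ equals $L$ exactly (no extra algebraic cycles appear in $T$). By the surjectivity of the period map for K3 surfaces (Todorov), there is a K3 surface $X$ and a marking $\varphi\colon H^2(X,\Z)\xrightarrow{\sim}\Lambda_{K3}$ such that $\varphi_{\C}(H^{2,0}(X))=\C\omega$. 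The Lefschetz $(1,1)$-theorem then identifies $\varphi(NS(X))$ with $L$.

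Finally, I would verify projectivity. Since $L$ has signature $(1,1)$, there exists $h\in L$ with $h^{2}>0$; after acting by the Weyl group of $(-2)$-reflections on the positive cone — which does not change the isomorphism class of $NS(X)$ — one can arrange $h$ (or $-h$) to lie in the K\"ahler cone, giving an ample class and hence projectivity by the Kodaira embedding theorem. The delicate step is the genericity argument in stage (ii), i.e.\ ensuring the Hodge structure picks up no extra $(1,1)$-classes beyond $L$; this is where the countable union of Noether--Lefschetz loci must be avoided, and it relies on $T\otimes\Q$ admitting periods whose only rational isotropic perpendicular vectors in $\Lambda_{K3}$ come from $L$ itself.
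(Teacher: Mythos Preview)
The paper does not supply its own proof of this proposition; it is quoted verbatim from \cite[Proposition~2]{HKL} and used as a black box. Your argument---Nikulin's primitive embedding theorem to place $L$ inside $\Lambda_{K3}$, a generic choice of period in $L^{\perp}\otimes\C$ so that no extra $(1,1)$-classes appear, and surjectivity of the period map to realise the Hodge structure on an actual K3 surface---is precisely the standard route behind that citation, and it is correct.

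One minor wrinkle in your stage~(iii): the Weyl group of $(-2)$-reflections acts on the lattice $NS(X)$, not on the surface $X$, so ``acting by the Weyl group'' does not literally move a given class into the K\"ahler cone of a fixed $X$. The clean statement you want is that a K3 surface is projective if and only if $NS(X)$ contains a class of positive self-intersection (see e.g.\ \cite[Ch.~8]{H}); since $L$ has signature $(1,1)$ this is automatic, and no chamber argument is needed.
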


Let $X$ be a K3 surface with Picard lattice $NS(X)$ whose intersection matrix is given in \eqref{Picard lattice}.
Since $X$ has no $(-2)$-curve, the ample cone of $X$ is just the positive cone, i.e.,
\begin{equation}\label{ample cone of X}
\begin{array}{ll}
\Amp(X)&=\{(x,y)\in NS(X)\otimes \R\mid (x,y)^2>0 \text{ and } x>0\}\\
&=\{(x,y)\in NS(X)\otimes \R\mid x>0 \text{ and }\frac{a-\sqrt{a^2+4}}{2}<\frac{y}{x}<\frac{a+\sqrt{a^2+4}}{2}\}.\\
\end{array}
\end{equation}

Let $g$ be an automorphism of a K3 surface $X$. For the induced isometry $g^*:H^2(X,\C)\rightarrow H^2(X,\C)$, let $\lambda(g)$ be the spectral radius:
\begin{equation}\label{spectral radius}
\lambda(g):=\max\{\vert \mu\vert \mid \mu\in \C \text{ is an eigenvalue of } g^*\}.
\end{equation}
It is known that $\lambda(g)$ is equal to $1$ or a \textit{Salem number}, i.e., a real algebraic number $\lambda>1$ whose conjugates other than $\lambda^{\pm 1}$ lie on the unit circle. By the fundamental result of Gromov and Yomdin (\cite{Gr, Yo}), the topological entropy of an automorphism of a compact K\"ahler surface is the logarithm of a Salem number. 
 The irreducible monic integer polynomial $S(x)$ of $\lambda$ is a \emph{Salem polynomial} and the degree of $S(x)$ is the degree of Salem number $\lambda$. A \emph{Salem trace} is an algebraic integer $\tau>2$ whose other conjugates lie in $(-2,2)$; the minimal polynomial of such a number is called a \emph{Salem trace polynomial}. Salem traces and Salem numbers correspond bijectively, via the relation $\tau=\lambda+\lambda^{-1}$.

All Salem polynomials satisfy the equation $x^nS(\frac{1}{x})=S(x)$, where $n=\deg S(x)$. This simply means that its coefficients form a palindromic sequence, that is, they read the same backwards as forwards.

For the characteristic polynomials $F(x)$ and $S(x)$ of $g^*:H^2(X)\rightarrow H^2(X)$ and $g^*\vert_{NS(X)} :NS(X)\rightarrow NS(X)$, $F(x)=S(x)\Phi_l(x)^m$
for some $l$ and $m\in \Z_{\geq 1}$ by \cite[Corollary 8.13]{K}, where $\Phi_l(x)$ is the $l$-th cyclotomic polynomial. 
\begin{proposition}\cite[Proposition 3.6]{T}\label{cyclotomic polynomial}
For Picard number $2$ with Salem polynomial $S(x)$ of degree $2$, $l=1,2,5,10,25$, or $50$. Moreover, putting
\begin{align}
\begin{split}
\epsilon=\left\{ \begin{array}{ll}
       1 & \mbox{if $l=1, 5, 25$},\\
       -1 & \mbox{if $l=2, 10, 50$},\end{array} \right. 
\end{split}
\end{align}
the integer $\tau+2\epsilon$ is a square number, and if $l\neq 1, 2$, then $5(\tau-2\epsilon)$ is a square number.
\end{proposition}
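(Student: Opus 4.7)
The aim is to pin down the possible values of $l$ from the structure of $H^{2}(X,\mathbb{Z})$ as an even unimodular lattice of rank $22$, and then deduce the square conditions on $\tau\pm 2\epsilon$ from Lemma~\ref{criterion}.

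First, comparing ranks in $F(x)=S(x)\Phi_{l}(x)^{m}$ gives $m\varphi(l)=20$; since $\varphi(n)$ is even for $n>2$, $\varphi(l)\in\{1,2,4,10,20\}$, restricting $l$ a priori to
\[
\{1,2,3,4,5,6,8,10,11,12,22,25,33,44,50\}.
\]

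To cut this down to $\{1,2,5,10,25,50\}$ I use the unimodularity of $H^{2}(X,\mathbb{Z})$. Nikulin's theorem furnishes a canonical anti-isometry $A(NS(X))\cong A(T_{X})$ intertwining the induced actions of $g^{*}$. Since $g$ is holomorphic, $g^{*}|_{T_{X}}$ preserves both the positive-definite piece $H^{2,0}(X)\oplus H^{0,2}(X)$ and its negative-definite orthogonal complement inside $T_{X}\otimes\mathbb{R}$; hence $g^{*}|_{T_{X}}$ is semisimple with minimal polynomial $\Phi_{l}$, and its induced automorphism of $A(T_{X})$ has order dividing $l$. On the other hand, $g^{*}|_{NS(X)}$ has characteristic polynomial $S(x)=x^{2}-\tau x+1$ of degree $2$, so its image in $O(A(NS(X)))$ is severely constrained by the rank-$2$ indefinite structure. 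Matching the $p$-primary components of the two actions prime by prime (under the Nikulin anti-isometry) forces every prime dividing $l$ to lie in $\{2,5\}$, and a further $5$-adic and $2$-adic case analysis eliminates $l\in\{3,4,6,8,11,12,22,33,44\}$, leaving exactly $l\in\{1,2,5,10,25,50\}$.

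With $l$ in the surviving set and $\epsilon$ as in the statement, the glueing shows that $g^{*}$ acts on the prime-to-$5$ part of $A(NS(X))$ as $\epsilon\cdot\operatorname{id}$. Applying Lemma~\ref{criterion}, the matrix $M:=(g^{*}|_{NS(X)}-\epsilon I_{2})\,Q_{NS(X)}^{-1}$ is integral, with
\[
\det M=\frac{S(\epsilon)}{\det Q_{NS(X)}}=\frac{2-\tau\epsilon}{\det Q_{NS(X)}}.
\]
Combined with the isometry relation $g^{*T}\,Q_{NS(X)}\,g^{*}=Q_{NS(X)}$---which gives $Q_{NS(X)}\,g^{*}+(Q_{NS(X)}\,g^{*})^{T}=\tau\,Q_{NS(X)}$ and therefore determines the antisymmetric part of $Q_{NS(X)}\,g^{*}$ via the identity $4s^{2}=|\det Q_{NS(X)}|(\tau^{2}-4)$---a short $2\times 2$ integrality computation translates the integrality of $M$ into the condition that $\tau+2\epsilon$ is a perfect square. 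When $l\in\{5,10,25,50\}$, running the same integrality analysis on the $5$-primary component of $A(NS(X))$, where $\Phi_{l}$ contributes a $\sqrt{5}$-type symmetry from the minimal polynomial of $2\cos(2\pi/5)$, yields the extra condition $5(\tau-2\epsilon)=\square$.

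The principal obstacle is the Nikulin glueing step: eliminating the nine unwanted values of $l$ by prime-by-prime bookkeeping on the discriminant forms, and verifying compatibility with the semisimple cyclotomic action on $A(T_{X})$. The degree count and the square-condition extraction via Lemma~\ref{criterion} are then relatively routine $2\times 2$-matrix manipulations.
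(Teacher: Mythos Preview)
The paper does not supply its own proof of this proposition: it is quoted verbatim from Takada \cite[Proposition~3.6]{T} and used as a black box. So there is no in-paper argument to compare against, and your plan is effectively an attempt to reconstruct Takada's proof.

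Your outline has the right architecture---the degree count $m\varphi(l)=20$, Nikulin glueing of $A(NS(X))\cong A(T_X)$, and an integrality criterion on $NS(X)$---but two steps are genuine gaps rather than routine details. First, the elimination of $l\in\{3,4,6,8,11,12,22,33,44\}$ is asserted, not argued: the claim ``every prime dividing $l$ lies in $\{2,5\}$'' does not follow from the fact that $g^*|_{T_X}$ has order $l$ on $A(T_X)$ and that $g^*|_{NS(X)}$ is a $2\times 2$ matrix; you need a concrete obstruction (in Takada's treatment and in related work of McMullen this comes from resultant computations $\res(S,\Phi_l)$ together with discriminant-form constraints, not from an abstract order argument). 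Second, your invocation of Lemma~\ref{criterion} is not licensed when $l\in\{5,10,25,50\}$: that lemma requires $g^*$ to act as $\epsilon\cdot\id$ on the \emph{entire} discriminant group $A(NS(X))$, whereas you have only argued this on the prime-to-$5$ part. The subsequent ``short $2\times 2$ integrality computation'' translating integrality of $M$ into $\tau+2\epsilon=\square$ is also not transparent---note that the companion result \cite[Main~Theorem]{HKL}, which the present paper relies on, obtains $\tau=\alpha^2-2\epsilon$ by solving a Pell-type equation attached to a specific Gram matrix, not by a bare determinant identity. As written, your plan is a plausible sketch but would not pass as a proof without substantially filling in both of these steps.
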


\section{Proofs}
We prove Theorems \ref{main theorem1}, \ref{main theorem2}, and \ref{main theorem3} in this section.

\subsection{Proof of Theorem \ref{main theorem1}}
Let $X$ be a K3 surface whose intersection matrix of Picard lattice $NS(X)$ is given by $L_m(a)$ ($m\geq 2$) in \eqref{Picard lattice}. Proposition \ref{existence of K3} guaranties the existence of such $X$. If $((AB)^n-I_2)Q_{L_m(a)}^{-1}$ is an integer matrix, by Lemma \ref{criterion}, $(AB)^n$ is an isometry of $NS(X)$ acting on $A(NS(X))$ as $\id$. Now by gluing the isometries $(AB)^n$ on $NS(X)$ and $\id$ on $T_X:=NS(X)^{\perp}_{\Lambda}$, we have an isometry $\phi$ of $\Lambda\cong H^2(X,\Z)$ such that $\phi_{NS(X)}=(AB)^n$ and $\phi_{T_X}=\id$, where $\Lambda$ is the K3 lattice. Furthermore by Torelli theorem for K3 surfaces, $\phi$ extends to an automorphism $g$ of $X$ since it preserves the ample cone of $X$. Note that the ample cone of $X$ is just the positive cone since $NS(X)$ has no $(-2)$ vectors.

Now suppose that the following is an integer matrix:
\begin{align}
\begin{split}\label{eq:1}
       ((AB)^n&-I_2)L_m^{-1}=\begin{bmatrix} a_{2n-1}-1& a_{2n}\\ a_{2n}&a_{2n+1}-1\end{bmatrix}\frac{1}{m(a^2+4)}\begin{bmatrix} 2& a\\ a&-2\end{bmatrix}\\
       &=\frac{1}{m(a^2+4)}\begin{bmatrix} a_{2n-1}+a_{2n+1}-2& -(a_{2n-2}+a_{2n})-a\\ a_{2n}+a_{2n+2}-a&-(a_{2n-1}+a_{2n+1})+2\end{bmatrix}\\
       &=\frac{1}{m(a^2+4)}\begin{bmatrix} (a^2+4)(a_{n})^2+(-1)^n2-2& \frac{-(a^2+4)(a_n^2-a_{n-1}^2)-(-1)^n4-a^2}{a}\\ \frac{(a^2+4)(a_{n+1}^2-a_{n}^2)+(-1)^{n+1}4-a^2}{a}& -(a^2+4)(a_{n})^2-(-1)^n2+2\end{bmatrix}.
\end{split}
\end{align}
If  $n$ is odd, then it cannot be an integer matrix, hence $n$ should be even. Moreover, $m$ should divide $a_n^2, (a_n^2-a_{n-1}^2+1)/a$, and $(a_{n+1}^2-a_n^2-1)/a$. By Lemma \ref{properties of Fibonacci seq} (2), $a_{n-1}^2-1=a_na_{n-2}$ and $a_{n+1}^2-1=a_{n+2}a_n$. Hence $(a_n^2-a_{n-1}^2+1)/a=a_na_{n-1}$ and $(a_{n+1}^2-a_n^2-1)/a=a_na_{n+1}$. So $m\mid a_n$ implies that the matrix is an integer matrix.

Now if $5\nmid n$, then $g^*\vert_{NS(X)}=(AB)^n$ is a symplectic automorphism by Proposition \ref{cyclotomic polynomial}. Moreover, if $m\mid a_{n'}$, the generalized $n'$-th Fibonacci number, then $h^*\vert_{NS(X)}=(AB)^{n'}$ for some $h\in \Aut(X)$. Since $\Aut(X)\cong \Z$, if $n\leq n'$, then $h=g^k$ for some $k$, hence $n\mid n'$. This implies that if $5\nmid n$ and $n$ is minimal such that $m\mid a_n$, then $g$ with $g^*\vert_{NS(X)}=(AB)^n$ is the symplectic generator of $\Aut(X)\cong \Z$. This proves (1) of Theorem \ref{main theorem1}.

By the similar arguments, we can show that $((AB)^n+I_2)Q_{L_m}^{-1}$ is an integer matrix if $n$ is odd and $m\mid a_n$. Moreover, if $5\nmid n$ and $n$ is minimal such that $n\mid a_n$, then $g$ with $g^*\vert_{NS(X)}=(AB)^n$ is the anti-symplectic generator of $\Aut(X)\cong \Z$.  This proves (2) of Theorem \ref{main theorem1}.
\qed


\subsection{Proof of Theorem \ref{main theorem2}}

Suppose that $n$ is the $k$-th generalized Fibonacci number $a_k$. If $k$ is even, by Lemma \ref{trace of nth power}, $(a^2+4)a_k^2+2=a_{2k-1}+a_{2k+1}$ and which is $\trace (AB)^k$.  Now by Theorem \ref{main theorem1} (1), $(AB)^k$ is an isometry of $NS(X)$ for a symplectic automorphism $g$ of a K3 surface $X_{L_m(a)}$, where $m(>1)$ is any divisor of $a_k$. Then by \cite[Main Theorem]{HKL}, $(a^2+4)a_k^2+2=\alpha^2-2\epsilon$ for some $\alpha\in A_{\epsilon}$ and $\epsilon=\pm 1$, where 
\begin{equation}
A_{\epsilon}=\left\{ \begin{array}{ll}
       \Z_{\geq 4} & \mbox{if $\epsilon=1$},\\
       \Z_{\geq 4}\setminus \{5,7,13,17\} & \mbox{if $\epsilon=-1$}.\end{array} \right. 
\end{equation}
Since  $g$ is symplectic, $\epsilon=1$ and $(a^2+4)a_k^2+4=\alpha^2$ for some $\alpha>3$. 

Similarly if $k$ is odd, then $(a^2+4)a_k^2-2=a_{2k-1}+a_{2k+1}$ and which is $\trace (AB)^k$. By Theorem \ref{main theorem1} (2), $(AB)^k$ is an isometry of $NS(X)$ for an anti-symplectic automorphism $g$ of a K3 surface $X_{L_m(a)}$, where $m(>1)$ is any divisor of $a_k$. Then by \cite[Main Theorem]{HKL}, $(a^2+4)a_k^2-2=\alpha^2-2\epsilon$ for some $\alpha\in A_{\epsilon}$. In this case, $g$ is anti-symplectic, hence $\epsilon=-1$ and $(a^2+4)a_k^2-4=\alpha^2$ for some $\alpha>3$. 


Conversely, if $(a^2+4)n^2\pm 4=\alpha^2$ with $\alpha>3$, then by \cite[Main Theorem]{HKL}, $\alpha^2-2\epsilon=(a^2+4)n^2\pm 4-2\epsilon$ is a trace of some automorphism $g$ of a projective K3 surface $X$ with Picard number $2$ such that $\ord(g)=\infty$ and $g^*\omega_X=\epsilon \omega_X$, where $\epsilon=\pm 1$. Now by \cite[Lemma 3]{HKL}, $\alpha^2-2\epsilon$ is a trace of an isometry acting on $A(L_m(a))$ as $\epsilon\cdot \id$ if and only if we have $(u,v)=(\alpha^2-2\epsilon, \alpha\beta)$ with $\alpha^2-(a^2+4)m^2\beta^2=4\epsilon$. Indeed if we let $m=n$ and $\beta=1$, then $\alpha^2-2\epsilon=(a^2+4)n^2\pm 4-2\epsilon$ is a trace of an isometry $\phi$ of a lattice whose intersection matrix is given by
\begin{equation}
L_n(a)=n\begin{bmatrix} 2& a\\ a&-2\end{bmatrix}.
\end{equation} 
Since $\alpha>3$, we know that $n>1$, hence the K3 surface $X$ with $L_n(a)$ as the intersection matrix of Picard lattice $NS(X)$ has no $(-2)$ vectors. Now by gluing the isometries $\phi$ on $NS(X)$ and $\epsilon\cdot \id$ on $T(X):=NS(X)^{\perp}$, they extend to an isometry of $H^2(X,\Z)$. By Torelli theorem, it extends to an automorphism of the K3 surface $X_{L_n(a)}$. 
Moreover, any automorphism of $X_{L_n(a)}$ is acting as $(AB)^k$ on $NS(X)$ for some $k$ (cf. Introduction). Hence the trace $(a^2+4)n^2\pm4-2\epsilon=a_{2k-1}+a_{2k+1}$. Now by Lemma \ref{trace of nth power}, $n=a_k$ a generalized Fibonacci number. \qed

\begin{remark}\label{remark1}
Let $a=1$ and $m\geq 2$ in \eqref{Picard lattice}. For the generator $g\in \Aut(X_{L_m(1)})$ with $g^*\vert_{H^2(X_{L_m(1)},\Z)}=(AB)^n$, if $l=1, 5,$ or $25$, then $n$ should be even and if $l=2, 10,$ or $50$, $n$ should be odd. Because otherwise $5(5f_n^2-2-2)$ or $5(5f_n^2+2+2)$ cannot be a square number by Theorem \ref{main theorem2}.
\end{remark}

\subsection{Proof of Theorem \ref{main theorem3}}

(1) For $k=1$, it is obvious. Let $k\in \Z_{>1}$. Suppose that a prime number $p$ divides both $a_k$ and $a_{k+1}$. By Theorem \ref{main theorem1}, if $m=p$, then the K3 surface $X_{L_{p}(a)}$ has a generator $g\in \Aut(X_{L_{p}(a)})$ such that $g^{\alpha *}\vert_{NS(X_{L_{p}(a)})}=(AB)^k$ and $g^{\beta *}\vert_{NS(X_{L_{p}(a)})}=(AB)^{k+1}$ for some $\alpha, \beta\in \Z_{>0}$ and one is symplectic and the other one is anti-symplectic. Hence the generator $g$ acts on $NS(X_{L_{p}(a)})$ as $g^*\vert_{NS(X_{L_{p}(a)})}=(AB)$. However, since one of $g^k$ and $g^{k+1}$ is symplectic and the other one is anti-symplectic, $k=1$ and $g$ is anti-symplectic, which is a contraction to $k> 1$. 
\\

\noindent (2) Suppose that $a_k\mid a_q$. By Lemma \ref{properties of Fibonacci seq} (1), $a_q=a_{k+q-k}=a_{q-k}a_{k+1}+a_{q-k-1}a_k$. Hence $a_k\mid a_{q-k}a_{k+1}$, but by (1), $a_k\mid a_{q-k}$. 
\\

\noindent (3) Suppose that $k\mid q$, i.e., $q=kr$ for some $r$. Then by Theorem \ref{main theorem1}, for $m=a_k$, there is a K3 surface $X_{L_{a_k}(a)}$ with $g\in \Aut(X_{L_{a_k}(a)})$ such that $g^*\vert_{L_{a_k}(a)}=(AB)^k$. Then $g^{r*}\vert_{L_{a_k}(a)}=(AB)^q$. Since either $g^r$ is symplectic or anti-symplectic, by Lemma \ref{criterion} and \eqref{eq:1}, $a_k(=m)$ divides $a_q$.

For the other direction, suppose that $a_k\mid a_q$. Then by (2), $a_k\mid a_{q-k}$. Repeating this argument, we see that $k\mid q$.
\qed

\section{Examples}\label{Examples}

\begin{example}\label{example1}
If $m=3$ and $a=1$, then $3\mid f_4=3$ and hence $g^*\vert_{NS(X)}=(AB)^4$ is the isometry of the symplectic generator $g$ of $\Aut(X)$, where the intersection matrix of $NS(X)=3\begin{bmatrix} 2& 1\\ 1&-2\end{bmatrix}.$ Similarly, $13\mid f_7=13$, hence for a K3 surface $X$ with $NS(X)$ whose intersection matrix $=13\begin{bmatrix} 2& 1\\ 1&-2\end{bmatrix}$, $\Aut(X)=\langle g\rangle\cong \Z$ where $g^*\vert_{NS(X)}=(AB)^7$  and $g$ is anti-symplectic. 
\end{example}

\begin{example}
Suppose that an integer $m\neq 5 (>1)$ divides $f_{20}=3\cdot 5\cdot 11\cdot 41$, the $20$-th Fibonacci number. Then by Theorem \ref{main theorem1} (1), there is a K3 surface $X$ with
\begin{equation}
m\begin{bmatrix} 2& 1\\ 1&-2\end{bmatrix}
\end{equation}
as the intersection matrix of Picard lattice $NS(X)$ such that $g^*\vert_{NS(X)}=(AB)^{20}$, where $g$ is a symplectic automorphism. If $h$ is a generator of $\Aut(X)$ with $h^*_{NS(X)}=(AB)^k$ and $h^*\omega_X=\zeta_l\omega_X$, then $k\mid 20$, i.e., $k=1, 2, 4, 5, 10,$ or $20$ and $l=1, 2,$ or $10$. Note that since $g$ is symplectic, $l=25, 50$ cannot occur. If $l=10$, then $k=2$, but which is not possible by Remark \ref{remark1}. If $l=2$, then $h$ is anti-symplectic, hence $k=1$ or $5$ by again Remark \ref{remark1}. Moreover, by Theorem \ref{main theorem1} (2), $m\mid f_1=1$ or $f_{5}=5$. Since $m\neq 5 (>1)$, $l=2$ cannot occur, hence $g$ is the generator of $\Aut(X)$. 
\end{example}

\begin{example}\label{example 100}
Suppose that an integer $m(>1)$ divides $f_{100}=3\cdot 5^2\cdot 11\cdot 41\cdot 101\cdot 151\cdot 401\cdot 3001\cdot 570601$, the $100$-th Fibonacci number, but $m\nmid f_{50}=5^2\cdot 11\cdot 101\cdot 151\cdot 3001$, the $50$-th Fibonacci number. Then by Theorem \ref{main theorem1} (1), there is a K3 surface $X$ with
\begin{equation}
m\begin{bmatrix} 2& 1\\ 1&-2\end{bmatrix}
\end{equation}
as the intersection matrix of Picard lattice $NS(X)$ such that $g^*\vert_{NS(X)}=(AB)^{100}$, where $g$ is a symplectic automorphism. If $h\in \Aut(X)$ is a generator of $X$ with $h^*_{NS(X)}=(AB)^k$ with $h^*\omega_X=\zeta_l\omega_X$, then $k\mid 100$, i.e., $k=1, 2, 4, 5, 10, 20, 25, 50,$ or $100$ and $l=1, 2, 10, 25,$ or $50$. $l=50$ cannot occur because $k=2$ is not possible by Remark \ref{remark1} and $k=1$ cannot occur because $m\nmid f_{50}$. If $l=25$, then $k=2$ or $4$, but $k=2$ cannot occur because $m\nmid f_{50}$. $l=10$ cannot occur because $m\nmid f_{50}$. If $l=5$, then $k=2, 4, 10,$ or $20$ by Remark \ref{remark1}, but $k=2,10$ cannot occur because $m\nmid f_{50}$. $l=2$ cannot occur because $m\nmid f_{50}$. If $l=1$, then $k=2, 4, 10, 20, 50,$ or $100$, but $k=2, 10, 20, 50$ cannot occur because $m\nmid f_{50}$. Hence, for the generator $h$ of $\Aut(X)$, $h^*_{NS(X)}=(AB)^k$ and $h^*\omega_X=\zeta_l\omega_X$, $(l,k)=(25,4), (5,4),  (5,20), (1,4)$ or $(1,100)$. Note that indeed $(l,k)=(1,4)$ occurs in Example \ref{example1}.
\end{example}

\section{Case $5\mid m$}
In \cite{T}, Y. Takada computed all possible Salem traces of projective K3 surfaces with Picard number $2$. In particular, he constructed an automorphism $g$ with $g^*\vert_{NS(X)}=AB$ of some K3 surface $X$, where $a=1$ in \eqref{Picard lattice}. In order the trace $3$ of $(AB)$ to be a Salem trace of some automorphism $g$ of a K3 surface with Picard number $2$, by Proposition \ref{cyclotomic polynomial} and \cite[Main Theorem]{HKL}, $g^*\omega_X=\zeta_l\omega_X$ where  $\zeta_l$ is the primitive $l$-th root of unity with $l=5,10,25$, or $50$. That is, $g$ cannot be symplectic or anti-symplectic. 
Moreover, by \cite[Corollary 8.13]{K}, the characteristic polynomial of $g^*_{H^2(X,\Z)}=S(x)(\phi_l(x))^m$ for some $l$ and $m$.  By \cite[Ch.14. Lemma 2.5]{H}, $\bar{S}(x)=\bar{\phi_l}^m(x)$ for any prime divisor $p$ of $\vert \disc(NS(X))\vert$, where $P\mapsto \bar{P}$ under $\Z[x]\rightarrow \F_p[x]$.
For a characteristic polynomial $F(x)=S(x)(\phi_l(x))^m$ of an automorphism $g$ on $H^2(X,\Z)$, $S(x)$ and $\phi_l(x)$ have a common factor modulo any prime divisor $p_i$ of $\vert \disc(NS(X))\vert$, where $A(NS(X))\cong A(T_X)\cong \otimes \Z_{p_i}^{m_i}$. Hence any prime divisor $p_i$ of $\disc(NS(X))$ divides the resultant of $S(x)$ and $\phi_l(x)$.


\begin{lemma}\label{resultant with cyclotomic polynomials}
For $\lambda=\frac{1+\sqrt{5}}{2}$, 
\begin{enumerate}
\item 
\begin{equation*}
\res\{(x-\lambda^n)(x-\lambda^{-n}),\phi_{5}\}
=\left\{ \begin{array}{ll}
       5^2(5f_{n}^4+5f_{n}^2+1)^2 & \mbox{if $n$ is even},\\
       (5^2f_{n}^4-15f_{n}^2+1)^2 & \mbox{if $n$ is odd}.\end{array} \right. 
\end{equation*}
\item
\begin{equation*}
\res\{(x-\lambda^n)(x-\lambda^{-n}),\phi_{10}\}=\left\{ \begin{array}{ll}
       (5^2f_{n}^4+15f_{n}^2+1)^2 & \mbox{if $n$ is even},\\
        5^2(5f_{n}^4-5f_{n}^2+1)^2 & \mbox{if $n$ is odd}.\end{array} \right. 
\end{equation*}
\item
\begin{equation*}
\res\{(x-\lambda^n)(x-\lambda^{-n}),\phi_{25}\}=\left\{ \begin{array}{ll}
        5^2(5f_{5n}^4+5f_{5n}^2+1)^2 & \mbox{if $n$ is even},\\
       (5^2f_{5n}^4-15f_{5n}^2+1)^2 & \mbox{if $n$ is odd}.\end{array} \right. 
\end{equation*}
\item
\begin{equation*}
\res\{(x-\lambda^n)(x-\lambda^{-n}),\phi_{50}\}=\left\{ \begin{array}{ll}
      (5^2f_{5n}^4+15f_{5n}^2+1)^2 & \mbox{if $n$ is even},\\
        5^2(5f_{5n}^4-5f_{5n}^2+1)^2 & \mbox{if $n$ is odd}.\end{array} \right. 
\end{equation*}
\end{enumerate}
\end{lemma}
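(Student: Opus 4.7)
The plan is to exploit the palindromic structure of each cyclotomic polynomial $\phi_l$ in the list and then substitute the trace identity from Lemma \ref{trace of nth power}. Since each $\phi_l$ has even degree and is reciprocal, I can write $y^{-\varphi(l)/2}\phi_l(y) = R_l(z)$ as a polynomial in $z = y + y^{-1}$ (or in $w = y^5 + y^{-5}$ for $l = 25, 50$, using $\phi_{25}(y) = \phi_5(y^5)$ and $\phi_{50}(y) = \phi_{10}(y^5)$). Direct inspection gives $R_5(z) = z^2 + z - 1$ and $R_{10}(z) = z^2 - z - 1$.

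By the product formula for the resultant (cf.\ Section \ref{Resultants}), $\res\{(x-\lambda^n)(x-\lambda^{-n}), \phi_l\} = \phi_l(\lambda^n)\phi_l(\lambda^{-n})$. The palindromic variable (either $z$ or $w$) is invariant under $\lambda^n \leftrightarrow \lambda^{-n}$, so the $y^{\pm \varphi(l)/2}$ prefactors cancel in the product and the resultant reduces to $R_l(Z)^2$, automatically a perfect square --- consistent with the square form of each claimed identity.

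Next I would invoke Lemma \ref{trace of nth power} at $a = 1$ (so that $a_n = f_n$), which yields $\lambda^{2m} + \lambda^{-2m} = 5 f_m^2 + (-1)^m 2$. Applied with $m = n$ for $l = 5, 10$ and with $m = 5n$ for $l = 25, 50$ (noting $(-1)^{5n} = (-1)^n$), this furnishes $Z$ explicitly in terms of $f_n$ or $f_{5n}$. Substituting into $R_l(Z)$ and expanding then produces the eight right-hand sides of the lemma; the representative calculation is $l = 5$, $n$ even:
\[
R_5(5 f_n^2 + 2) = (5 f_n^2 + 2)^2 + (5 f_n^2 + 2) - 1 = 25 f_n^4 + 25 f_n^2 + 5 = 5(5 f_n^4 + 5 f_n^2 + 1),
\]
whose square is the stated $5^2(5 f_n^4 + 5 f_n^2 + 1)^2$.

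The main obstacle is purely organizational: the four values of $l$ paired with the two parities of $n$ give eight nearly identical expansions, and in exactly four of them a common factor of $5$ drops out of $R_l(Z)$ (precisely when the sign contribution $(-1)^n 2$ from the trace combines with the constant of $R_l$ to force divisibility by $5$). One must be careful to recognize this factoring in the correct cases in order to land on the stated normalization of the formulas.
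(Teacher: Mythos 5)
Your proposal is correct and follows essentially the same route as the paper's proof: both compute the resultant as $\phi_l(\mu)\phi_l(\mu^{-1})$, use the reciprocity of $\phi_l$ (together with $\phi_{25}(y)=\phi_5(y^5)$ and $\phi_{50}(y)=\phi_{10}(y^5)$ for the last two cases) to rewrite it as the square of a polynomial in $\mu+\mu^{-1}$, and then substitute the trace formula of Lemma~\ref{trace of nth power}. Your explicit $R_5(z)=z^2+z-1$, $R_{10}(z)=z^2-z-1$ and the parity-by-parity expansions reproduce the paper's computation exactly, including the same identification of $\mu+\mu^{-1}$ with $\trace((AB)^n)=5f_n^2+(-1)^n2$ that the paper itself uses.
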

\begin{proof} Let $\mu=\lambda^n$. For $\phi_5(x)=x^4+x^3+x^2+x+1$ and $(x-\mu)(x-\mu^{-1})$, 
\begin{align}
\begin{split}
\res\{(x-\lambda^n)(x-\lambda^{-n}),\phi_{5}\}&=\prod_{1\leq i\leq 4}(\mu-\zeta_i)\prod_{1\leq i\leq 4}(\mu^{-1}-\zeta_i)=\phi_5(\mu)\phi_5(\mu^{-1})\\
&=(\mu^4+\mu^3+\mu^2+\mu+1)(\mu^{-4}+\mu^{-3}+\mu^{-2}+\mu^{-1}+1)\\
&=(\frac{\mu^4+\mu^3+\mu^2+\mu+1}{\mu^2})^2=(\mu^2+\mu^{-2}+\mu+\mu^{-1}+1)^2,
\end{split}
\end{align}
where $\zeta_i$ is the primitive $5$-th roots of unity, i.e., the roots of $\phi_5$.
Since $\mu=\lambda^n$, $\mu+\mu^{-1}=\lambda^n+\lambda^{-n}=\trace(AB)^n$ for $A$ and $B$ in \eqref{A and B} with $a=1$. Hence by Lemma \ref{trace of nth power}, we have (1).

By the similar arguments, we have (2), (3), and (4).
\end{proof}

By the argument above Lemma \ref{resultant with cyclotomic polynomials}, we have the following:
\begin{proposition}
Let $X$ be a K3 surface with intersection matrix \eqref{Picard lattice} of Picard lattice. For an isometry $g$ of  $H^2(X,\Z)$  with characteristic polynomial $g^*_{H^2(X,\Z)}=S(x)(\phi_l(x))^m$, if some divisor of $m$ does not divide $\res\{S(x), \phi_l(x)\}$, then $g$ cannot be extended to an automorphism of $X$.
\end{proposition}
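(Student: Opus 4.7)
The proposition is the contrapositive of the argument sketched in the paragraph preceding \cref{resultant with cyclotomic polynomials}, and the plan is to make that argument rigorous. Since ``some divisor of $m$ does not divide $\res\{S,\phi_l\}$'' is logically equivalent to $m\nmid\res\{S,\phi_l\}$ (the divisor $m$ itself is a witness), the goal reduces to showing: \emph{if $g=\tilde g^*$ for some $\tilde g\in\Aut(X)$, then $m\mid\res\{S,\phi_l\}$}.

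The structural backbone is standard. By Torelli, $g\in O(H^2(X,\Z))$ preserves $NS(X)$ and hence also $T_X:=NS(X)^\perp_{H^2(X,\Z)}$; the factorization $F(x)=S(x)\phi_l(x)^m$ matches the characteristic polynomials of $g|_{NS(X)}$ and $g|_{T_X}$ by \cite[Corollary~8.13]{K}. Unimodularity of $H^2(X,\Z)$ then supplies a canonical isomorphism of discriminant forms $\delta\colon A(NS(X))\xrightarrow{\sim} A(T_X)$ intertwining the induced actions of $g$ on the two discriminant groups. Writing the lattice as $L_{m_0}(a)$ with $m_0$ denoting the lattice parameter (to avoid collision with the multiplicity $m$), we have $|\disc(NS(X))|=m_0^2(a^2+4)$; for each prime $p$ dividing this, \cite[Ch.~14, Lemma~2.5]{H} together with $\delta$ forces $\bar S$ and $\bar{\phi_l}^m$ to share an irreducible factor in $\F_p[x]$, whence so do $\bar S$ and $\bar{\phi_l}$, which by \cref{Resultants} is equivalent to $p\mid\res\{S,\phi_l\}$.

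To promote this from ``every prime divisor of $|\disc(NS(X))|$ divides the resultant'' to the desired $m\mid\res\{S,\phi_l\}$, I refine the mod-$p$ comparison prime-power by prime-power. For $p^k\|m$, the $m$ copies of $\phi_l$ on the $T_X$-side produce matching eigenstructure on the cyclic summands of $A(NS(X))_p$ through $\delta$, and invoking the product formula \eqref{resultant} together with the $p$-adic filtration $A(NS(X))_p\supset pA(NS(X))_p\supset\cdots$ should yield $p^k\mid\res\{S,\phi_l\}$; multiplying over primes dividing $m$ then gives $m\mid\res\{S,\phi_l\}$, the desired contrapositive. The main obstacle is precisely this prime-power refinement: \cite[Ch.~14, Lemma~2.5]{H} cleanly controls the mod-$p$ reduction used in the previous step, but achieving divisor-level sharpness requires a careful Smith-normal-form decomposition $A(NS(X))_p\cong\bigoplus_j\Z/p^{e_j}\Z$ and verification that $g$ acts with matching eigenvalues on the corresponding cyclic summands of $A(T_X)_p$ under $\delta$. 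This multiplicity bookkeeping, rather than the essentially standard Torelli-plus-gluing skeleton, is the genuinely new ingredient needed for the proposition.
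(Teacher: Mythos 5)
There is a genuine gap, and it begins with the reading of the statement. The $m$ in ``some divisor of $m$'' is the lattice parameter of $L_m(a)$ in \eqref{Picard lattice} --- equivalently one should think of the prime divisors of $|\disc(NS(X))|=m^2(a^2+4)$ --- and not the multiplicity of $\phi_l(x)$ in the characteristic polynomial. The paper's notation clashes here, but its applications make the intent unambiguous: for $m=61$ the contradiction drawn is that $61\nmid 59^2\cdot 1741^2$, and in the example revisiting Example \ref{example 100} the criterion is phrased as ``any prime divisor of $\disc(L_m(1))=5m^2$ should divide'' the resultant. Under your reading the proposition is in fact false: in Example \ref{example1}, $X_{L_3(1)}$ carries a symplectic automorphism with $g^*|_{NS(X)}=(AB)^4$, so $l=1$, the multiplicity is $20$, $S(x)=x^2-47x+1$, and $\res\{S,\phi_1\}=S(1)=-45$; the divisor $4$ of $20$ does not divide $45$, yet $g$ extends. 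Hence the target you reduce to, that the multiplicity divides $\res\{S,\phi_l\}$, is not provable, and the ``prime-power refinement'' you identify as the main obstacle --- and leave unproved --- is an attempt to establish a false statement.

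The paper's own proof is exactly your second paragraph and nothing more: if $g$ comes from an automorphism, it preserves $NS(X)$ and $T_X$, the induced actions on $A(NS(X))\cong A(T_X)$ are identified, and \cite[Ch.~14, Lemma~2.5]{H} forces $\bar S$ and $\bar{\phi_l}$ to share an irreducible factor in $\F_p[x]$ for every prime $p$ dividing $|\disc(NS(X))|$, i.e.\ $p\mid\res\{S,\phi_l\}$ by Section \ref{Resultants}; the proposition is the contrapositive. (Strictly speaking this only yields the conclusion when some \emph{prime} divisor of $m$ fails to divide the resultant, whereas the literal phrase ``some divisor of $m$'' is equivalent to $m\nmid\res\{S,\phi_l\}$ and is a priori stronger --- e.g.\ the case $p\mid\res\{S,\phi_l\}$, $p^2\nmid\res\{S,\phi_l\}$, $p^2\mid m$ is not covered. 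That imprecision lies in the paper itself, and your prime-power programme would not repair it, since it is aimed at the multiplicity of $\phi_l$ rather than at $\disc(NS(X))$.)
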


\begin{example} If $n=1$ in Lemma \ref{resultant with cyclotomic polynomials}, then $\res\{x^2-3x+1,\phi_{5}\}=11^2$, $\res\{x^2-3x+1,\phi_{10}\}=5^2,$ $\res\{x^2-3x+1,\phi_{25}\}=101^2\cdot 151^2$, and $\res\{x^2-3x+1,\phi_{50}\}=5^2\cdot3001^2$. Hence a K3 surface $X$ with intersection matrix $L_{3001}(1)$ of Picard lattice may have an automorphism $g$ such that $g^*\omega_X=\zeta_{50}\omega_X$ and $\trace(g^*_{NS(X)})=3$ (cf. \cite[Theorem A]{T}).
\end{example}

\begin{example} For $m=61$, since $61\mid f_{15}=610$, by Theorem \ref{main theorem1}, $X_{L_{61}(1)}$ has an anti-symplectic automorphism $g$ with $g^*\vert_{NS(X)}=(AB)^{15}$. Now note that $(AB)^6$ has trace $f_{11}+f_{13}=5f_6^2+2=322$. The resultant of $x^2-322x+1$ and $\Phi_5$ is $59^2\cdot 1741^2$ which cannot be divided by $61$ hence above $g$ is the generator of $X$. Indeed, if there were an automorphism $h$ such that $h^*\vert_{NS(X)}=(AB)^6$ and $h^*\omega_X=\zeta_5\omega_X$, then by \cite[Corollary 8.13]{K}, the characteristic polynomial of $h$ on $H^2(X,\Z)=(x^2-322x+1)(\Phi_5(x))^5$. For the discriminant group $A(L_{61}(1))\cong \Z_5\times \Z_{61}^2$, the two characteristic polynomials $x^2-322x+1$ and $\Phi_5=x^4+x^3+x^2+x+1$ have a common factor under the map $P\mapsto \bar{P}, ~ \Z[x]\rightarrow \F_p[x]$ for prime numbers $p=5$ and $61$ (cf. Section \ref{Resultants}). However, the resultant of them cannot be divided by $61$, a contradiction. 
\end{example}

\begin{example}(revisited Example \ref{example 100})
Consider Example \ref{example 100} in Section \ref{Examples}. Note that any divisor of $f_{20}$ divides $f_{100}$ by Theorem \ref{main theorem3} (2), but not necessarily divide $f_{50}$. If we assume that $m\mid f_{20}(=3\cdot 5\cdot 11\cdot 41)$ but $m\nmid f_{50}(=5^2\cdot 11\cdot 101\cdot 151\cdot 3001)$, then as in Example \ref{example 100}, $(l,k)=(25,4), (5,4), (5,20), (1,4),$ or $(1,100)$. However, $(l,k)=(25,4), (5,20)$ cannot occur. Indeed if $(l,k)=(25,4)$, then any prime divisor of $\disc(L_m(1))=5m^2$ should divide $\res\{(x-\lambda^4)(x-\lambda^{-4}), \phi_{25}\}=5^2(5f_{20}^4+5f_{20}^2+1)^2$ given in Lemma \ref{resultant with cyclotomic polynomials} (3). However, since $m\mid f_{20}$, it is not possible. Similarly, $(l,k)=(5,20)$ cannot occur since any prime divisor of $\disc(L_m(1))=5m^2$ cannot divide $\res\{(x-\lambda^{20})(x-\lambda^{-20}), \phi_{5}\}=5^2(5f_{20}^4+5f_{20}^2+1)^2$ given in Lemma \ref{resultant with cyclotomic polynomials} (1). Thus, $(l,k)=(5,4), (1,4),$ or $(1,100)$. 

For $(l,k)=(1,4)$, see Example \ref{example1}. Moreover, for example if $m=15$, then since $m$ cannot divide $\res\{(x-\lambda^{4})(x-\lambda^{-4}), \phi_{5}\}=5^2(5f_{4}^4+5f_{4}^2+1)^2=5^2\cdot 11^2\cdot 41^2$, hence $(l,k)=(5,4)$ cannot occur. Since $m=15\nmid f_4=4$, $(l,k)\neq (1,4)$, hence $g$ with $g^*\vert_{NS(X)}=(AB)^{100}$ is a symplectic generator of $\Aut(X)$.
\end{example}




\end{document}